\documentclass[11pt]{article}
\usepackage{amsmath,amssymb,amsthm}
\usepackage{graphicx}

\newtheorem{theorem}{Theorem}[section]

\newtheorem{lemma}[theorem]{Lemma}

\newtheorem{definition}[theorem]{Definition}
\providecommand{\func}[1]{\operatorname{#1}}
\providecommand{\limfunc}[1]{\operatorname{#1}}
\begin{document}

\title{A computable wandering and tracelike vector for modular orbits in the
Bergman space}
\author{Lu\'{\i}s Daniel Abreu}
\date{}
\maketitle

\begin{abstract}
We \emph{construct} a function $\Phi $ such that the orbit under the
representation of $\Gamma =\func{PSL}(2,\mathbb{Z})$ is an orthonormal basis
for the Bergman space with weight $\alpha =12$. Moreover, we show that $\Phi 
$ is \emph{effectively computable} as a holomorphic function on the upper
half-plane (in the precise sense of computable analysis), by providing an
effective procedure. This constructs a wandering and tracelike vector for $%
\func{PSL}(2,\mathbb{Z})$, whose abstract existence was proved by Sir
Vaughan Jones in his last paper, where the corresponding construction was
left as a problem. The function is built using an orthonormalization and modularization method, and it displays modular reminiscencies, despite not being modular itself. It provides a computable implementing vector
for the abstract anti-isomorphism between the von Neumann algebra $%
M_{12}(\Gamma )$ and its commutant, which is generated, in R\u{a}dulescu's
sense, by cusp-form Toeplitz operators, while Voiculescu's results provide a
random matrix model for $M_{12}(\Gamma )$.
\end{abstract}

\section{Introduction}

\subsection{The Bergman space of the upper half-plane}

Let $\mathbb{H}=\{z:\func{Im}z>0\}$. For $\alpha >0$, the Bergman spaces in $%
\mathbb{H}$ are obtained by a conformal map from the Bergman spaces in the
unit disc \cite{HKZ}. Define the Bergman space $\mathcal{A}_{\alpha }=%
\mathcal{A}_{\alpha }\left( \mathbb{H}\right) $ by 
\begin{equation*}
\mathcal{A}_{\alpha }=\left\{ F\text{ holomorphic on }\mathbb{H}:\left\Vert
F\right\Vert _{\mathcal{A}_{\alpha }}^{2}=\int_{\mathbb{H}}|F(z)|^{2}(\func{%
Im}z)^{\alpha -1}\,dA_{\alpha }(z)<\infty \right\} \text{,}
\end{equation*}%
where $dA_{\alpha }(z)=\frac{\alpha 2^{\alpha -1}}{\pi }dx\,dy$. This is a
Hilbert space with reproducing kernel 
\begin{equation*}
K_{\alpha }(z,w)=\left( \frac{i}{z-\overline{w}}\right) ^{\alpha +1}\text{.}
\end{equation*}%
The following rational functions provide an orthonormal basis for $\mathcal{A%
}_{\alpha }$: 
\begin{equation}
e_{n}^{\alpha }(z)=\left( \frac{\Gamma (n+\alpha +1)}{\Gamma (\alpha
+1)\Gamma (n+1)}\right) ^{1/2}\left( \frac{z-2i}{z+2i}\right) ^{n}\left( 
\frac{2i}{z+2i}\right) ^{\alpha +1},\qquad n\geq 0\text{.}  \label{basis}
\end{equation}

\subsection{The modular action of $\func{PSL}(2,\mathbb{Z})$}

Let 
\begin{equation*}
\Gamma =\func{PSL}(2,\mathbb{Z})=\limfunc{SL}(2,\mathbb{Z})/\{I,-I\}\text{.}
\end{equation*}%
For every $\gamma \in \Gamma =\func{PSL}(2,\mathbb{Z})$, let 
\begin{equation*}
\sigma (\gamma )=%
\begin{pmatrix}
a_{\gamma } & b_{\gamma } \\ 
c_{\gamma } & d_{\gamma }%
\end{pmatrix}%
\end{equation*}%
denote the unique representative of $\gamma $\ satisfying $c_{\gamma }>0,$
or $c_{\gamma }=0,\ d_{\gamma }=1$.\ Since $ad-bc=1$ the integers $c$ and $d$
are coprime. Thus, when $c>1$ one can define the Dedekind sum%
\begin{equation*}
s(d,c)=\sum_{r=1}^{c-1}\left( \frac{r}{c}-\frac{1}{2}\right) \left( \frac{%
\left[ dr\right] _{c}{}}{c}-\frac{1}{2}\right) \text{, \ \ \ }c>0\text{,}
\end{equation*}%
where $\left[ dr\right] _{c}\in \{1,...,c-1\}$ denotes the residue of $dr$
modulo $c$. When $c=1$ this is an empty sum and $s(d,c)=0$. Consider then
the character%
\begin{equation*}
\chi 
\begin{pmatrix}
a & b \\ 
c & d%
\end{pmatrix}%
=\left\{ 
\begin{array}{cc}
e^{2\pi i\left[ \frac{a+d}{12c}-s(d,c)-\frac{1}{4}\right] ,} & c>0 \\ 
e^{\frac{\pi ib}{6}} & c=0,\text{ \ }d=1%
\end{array}%
\right.
\end{equation*}%
and, for every integer $\alpha >0$, let%
\begin{equation*}
(\rho _{\alpha }(\gamma )f)(z)=\chi (\sigma (\gamma ))^{\alpha +1}\left( 
\frac{1}{-c_{\gamma }z+a_{\gamma }}\right) ^{\alpha +1}f\left( \frac{%
d_{\gamma }z-b_{\gamma }}{-c_{\gamma }z+a_{\gamma }}\right) \text{.}
\end{equation*}%
Then, the character of the translation generator is%
\begin{equation*}
\chi 
\begin{pmatrix}
1 & 1 \\ 
0 & 1%
\end{pmatrix}%
=e^{\pi i/6}\text{,}
\end{equation*}%
thus $\chi (-I)=-1$, $\chi ^{12}=1$ and $\rho _{12}$ is \emph{a genu\'{\i}ne
unitary representation} of $\func{PSL}(2,\mathbb{Z})$. This will simplify
some calculations.

For $\gamma \in \func{PSL}(2,\mathbb{Z})$, the ordinary \emph{left
representation} is defined on $l^{2}(\Gamma )$ by 
\begin{equation*}
\left( \lambda \left( \gamma \right) c\right) \left( \eta \right) =c(\gamma
^{-1}\eta )\text{, \ \ \ \ \ \ \ \ }c\in l^{2}(\Gamma ),\text{ \ \ \ \ \ }%
\gamma ,\eta \in \Gamma \text{.}
\end{equation*}%
The canonical basis vector of $l^{2}(\Gamma )$ is%
\begin{equation*}
\delta _{\eta }(\gamma )=\left\{ 
\begin{array}{cc}
1, & \gamma =\eta \\ 
0, & \gamma \neq \eta%
\end{array}%
\right. \text{, \ \ \ \ \ }\gamma ,\eta \in \Gamma \text{.}
\end{equation*}%
which is orthogonal in the sense 
\begin{equation}
\left\langle \delta _{\gamma },\delta _{\eta }\right\rangle =\delta _{\gamma
,\eta }\text{.}  \label{orthogonal}
\end{equation}%
The action of the ordinary left representationt is $\lambda \left( \gamma
\right) \delta _{\eta }=\delta _{\gamma \eta }$. In particular,%
\begin{equation*}
\lambda \left( \gamma \right) \delta _{e}=\delta _{\gamma }\text{.}
\end{equation*}%
A von Neumann algebra $M$ is a $\ast $-closed unital algebra of bounded
operators on a (complex) Hilbert space $\mathcal{H}$ which is closed under
the topology of pointwise convergence on $\mathcal{H}$. By the von Neumann
double-commutant theorem, $S^{^{\prime \prime }}$ (the double-commutant of $%
S $) is the smallest von Neumann algebra containing $S$. The ordinary left
representation of $\func{PSL}(2,\mathbb{Z})$ defines a group von Neumann
algebra $M$ on $l^{2}(\Gamma )$\ by%
\begin{equation*}
M=L(\Gamma )=\{\lambda \left( \gamma \right) :\gamma \in \Gamma \}^{^{\prime
\prime }}\subset B\left( l^{2}(\Gamma )\right) \text{,}
\end{equation*}%
while the representation $\rho _{\alpha }$ defines a von Neumann algebra on $%
\mathcal{A}_{\alpha }$%
\begin{equation*}
M_{\alpha }=\{\rho _{\alpha }\left( \gamma \right) :\gamma \in \Gamma
\}^{^{\prime \prime }}\subset B\left( \mathcal{A}_{\alpha }\right) \text{.}
\end{equation*}

\begin{definition}
A function $\Phi \in \mathcal{A}_{\alpha }$ is said to be wandering if%
\begin{equation*}
\left\langle \rho _{\alpha }(\gamma )\Phi ,\rho _{\alpha }(\eta )\Phi
\right\rangle =\delta _{\eta ,\gamma }\qquad \gamma ,\eta \in \Gamma \text{,}
\end{equation*}%
while $\Phi \in \mathcal{A}_{\alpha }$ is called a tracelike vector if 
\begin{equation*}
\sum_{\gamma \in \Gamma }\left\vert (\rho _{\alpha }(\gamma )\Phi
)(z)\right\vert ^{2}=K_{\alpha }(z,z)=\left( \frac{1}{2\func{Im}z}\right)
^{\alpha +1}\text{.}
\end{equation*}%
The two conditions hold when $\{\rho _{\alpha }\left( \gamma \right) \Phi
:\gamma \in \Gamma \}$ provides an orthonormal basis for $\mathcal{A}%
_{\alpha }$.
\end{definition}

The Fuchsian group $\func{PSL}(2,\mathbb{Z})$ defines a von Neumann algebra $%
M_{\alpha }$\ on $\mathcal{A}_{\alpha }$. Since 
\begin{equation*}
\func{covol}(\func{PSL}(2,\mathbb{Z}))=\frac{\pi }{3}\text{,}
\end{equation*}%
the corresponding module dimension is (\cite{RadulescuDimension},\cite[%
Theorem 4.8]{Jones}): 
\begin{equation}
\dim _{M_{\alpha }}\mathcal{A}_{\alpha }=\frac{\alpha \func{covol}(\func{PSL}%
(2,\mathbb{Z}))}{4\pi }=\frac{\alpha }{12}\text{.}  \label{dimensionformula}
\end{equation}%
Thus, $\dim _{M_{12}}\mathcal{A}_{12}=1$.\ By \cite[Theorem 1.1]{Jones} (or
by \cite{Seip}, since $\func{covol}(\func{PSL}(2,\mathbb{Z}))=\frac{\pi }{3}$
shows us that $\alpha =12$ corresponds to the Nyquist density), if $\frac{%
\alpha }{12}\leq 1$ (in particular when $\alpha =12$), then 
\begin{equation}
\overline{\func{span}}\{\rho _{\alpha }(\gamma )e_{0}^{\alpha }:\gamma \in
\Gamma \}=\mathcal{A}_{\alpha }\text{.}  \label{denseorbit}
\end{equation}%
The purpose of the paper is to specify, at $\alpha =12$, a particular unit
vector $\Phi \in \mathcal{A}_{12}$ whose $\func{PSL}(2,\mathbb{Z})$-orbit is
a complete orthonormal basis of $\mathcal{A}_{12}$ (we will often use $%
\alpha $ instead of $12$, exclusively for typographical reasons). This
provides a computable construction of a tracelike and wandering vector,
whose existence is assured by the following result.

\begin{theorem}
\label{Jones}\textbf{\ }\cite[Theorem 9.7]{Jones}. If $\alpha =12$, there
exists $\Phi \in \mathcal{A}_{\alpha }$ such that $\Phi $\ is a tracelike
and wandering vector.
\end{theorem}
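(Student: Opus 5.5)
The plan is the standard von Neumann algebra route. The two ingredients are the dimension formula (\ref{dimensionformula}), which gives $\dim_{M_{12}}\mathcal{A}_{12}=1$, and the classification of normal modules over a II$_1$ factor by their dimension.

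First I would check that $M=L(\Gamma)$ is a II$_1$ factor. This holds because $\func{PSL}(2,\mathbb{Z})\cong\mathbb{Z}_2*\mathbb{Z}_3$ is an ICC group: every nontrivial element has an infinite conjugacy class. The trace is $\tau(x)=\langle x\delta_e,\delta_e\rangle$.

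Second, since $\rho_{12}$ is a genuine unitary representation of $\Gamma$, the map $\lambda(\gamma)\mapsto\rho_{12}(\gamma)$ should extend to a normal $*$-isomorphism $L(\Gamma)\to M_{12}$. To see this, I would realize $\rho_{12}$ as a subrepresentation of an amplification of $\lambda$. Concretely, unfolding over a fundamental domain $F$ gives a unitary
\[
\mathcal{A}_{12}\hookrightarrow L^2(\mathbb{H},(\mathrm{Im}\,z)^{11}dA)\cong l^2(\Gamma)\otimes L^2(F),
\]
under which $\rho_{12}(\gamma)$ becomes $\lambda(\gamma)\otimes 1$ up to the cocycle factor. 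This exhibits $\mathcal{A}_{12}$ as $p(l^2(\Gamma)\otimes L^2(F))$ for a projection $p$ in the commutant of $L(\Gamma)\otimes 1$. So $\mathcal{A}_{12}$ is a normal left $L(\Gamma)$-module, and its Murray--von Neumann dimension is the number computed in (\ref{dimensionformula}).

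Third, I would use the classification: normal left modules over a II$_1$ factor are classified up to unitary equivalence by their dimension, and the module of dimension $1$ is $l^2(\Gamma)$ itself. Since the dimension here is $1$, there is a unitary $U:l^2(\Gamma)\to\mathcal{A}_{12}$ with $U\lambda(\gamma)=\rho_{12}(\gamma)U$ for all $\gamma$.

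Set $\Phi=U\delta_e$. Then $\rho_{12}(\gamma)\Phi=U\lambda(\gamma)\delta_e=U\delta_\gamma$. Because $\{\delta_\gamma\}$ is an orthonormal basis by (\ref{orthogonal}) and $U$ is unitary, $\{\rho_{12}(\gamma)\Phi\}$ is an orthonormal basis of $\mathcal{A}_{12}$. This gives the wandering property directly. The tracelike property follows by expanding the reproducing kernel in this basis:
\[
K_{12}(z,z)=\|K_{12}(\cdot,z)\|^2=\sum_\gamma|\langle K_{12}(\cdot,z),\rho_{12}(\gamma)\Phi\rangle|^2=\sum_\gamma|(\rho_{12}(\gamma)\Phi)(z)|^2.
\]
This is exactly the remark after the definition that both conditions hold once the orbit is an orthonormal basis.

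The main obstacle is the second and third steps together: identifying $\mathcal{A}_{12}$ as a normal $L(\Gamma)$-module and making sure the dimension that is exactly $1$ is the Murray--von Neumann dimension over $L(\Gamma)$. I would rely on \cite[Theorem 4.8]{Jones} for this, since it is what (\ref{dimensionformula}) cites. A cyclic-vector alternative would use (\ref{denseorbit}), which shows that $e_0^{12}$ is cyclic. With $T$ the operator $x\delta_e\mapsto x e_0^{12}$ (closable, densely defined, intertwining), the polar decomposition $T=V|T|$ gives a partial isometry $V$ with dense range. Dimension $1$ on both sides then forces $V$ to be unitary, because a II$_1$ factor is finite, and this produces the same $U$.
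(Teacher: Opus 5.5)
Your proof is correct, but your main route differs from the one the paper actually carries out.

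\textbf{The paper's route.} The paper only cites this theorem; its own proof is the constructive one in Lemma \ref{injective} and Theorem \ref{main}. It fixes the explicit vector $e_0^{12}$, which is cyclic by (\ref{denseorbit}) (Jones/Seip completeness). The decay (\ref{Gramdecay}) and Schur's test (\ref{SchurGramian}) show that the synthesis operator $A$ is bounded. It then uses only additivity and faithfulness of von Neumann dimension, together with $\dim=1$ from (\ref{dimensionformula}), to get $\ker A=\{0\}$. Hence the polar factor $U$ of $A=UG^{1/2}$ is a unitary intertwiner and $\Phi=U\delta_e$.

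\textbf{Your alternative.} This is exactly your ``cyclic-vector alternative'', with one difference. At $\alpha=12$ your operator $T$ is bounded, not merely closable, so no unbounded polar decomposition is needed. For other weights, closability of $x\delta_e\mapsto x e_0$ would need its own argument.

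\textbf{Your main route.} You realize $\mathcal{A}_{12}$ as $p(\ell^2(\Gamma)\otimes L^2(F))$ by unfolding, and then invoke the classification of normal modules over the II$_1$ factor $L(\Gamma)$ by dimension. The cocycle can in fact be absorbed exactly via $f\mapsto(\gamma\mapsto(\rho_{12}(\gamma^{-1})f)|_F)$, since $\rho_{12}$ is a genuine representation. This is the standard abstract argument. It is shorter and needs neither (\ref{denseorbit}) nor the Gram decay estimate. However, the unitary it produces is purely abstract: it comes from Murray--von Neumann comparison of projections in the commutant.

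\textbf{What each buys.} The paper pays for an explicit cyclic vector and a boundedness estimate because this identifies $U$ as the polar part of a concrete operator. That gives $\Phi=\lim_m A\,G_m^{-1/2}\delta_e$, which is what makes the approximation and computability results of Theorem \ref{finitecertificate} possible.
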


It is observed in \cite[Pg. 7]{Jones} that Theorem \ref{Jones} shows that:%
\begin{equation*}
\end{equation*}

`\emph{\ ....there is a \textquotedblleft cyclic and separating trace
vector\textquotedblright\ for }$M_{\alpha }$\emph{\ in the Hilbert space,
and hence an anti-isomorphism between }$M_{\alpha }$\emph{\ and its
commutant on }$A_{\alpha }$\emph{. Now, in \cite{Radulescu}, Radulescu has
shown that the commutant }$M_{\alpha }^{\prime }$\emph{\ is always generated
in some sense by cusp forms which thus give a model for }$M_{\alpha
}^{\prime }$\emph{. In addition, Voiculescu in \cite{Voiculescu} has shown
that, at least for groups like }$\func{PSL}(2,\mathbb{Z})$\emph{, }$%
M_{\alpha }$\emph{\ has a random matrix model. Thus there exists a random
matrix model for cusp forms. This is a theorem, but it is of little use
unless one can lay one's hands on an explicit and manageable cyclic and
separating trace vector with which to implement the anti-isomorphism with
the commutant. If one did have such a vector one might be able to prove some
of the numerically well established relations between random matrices and
modular forms; see \cite{Keating}}.'%
\begin{equation*}
\end{equation*}

Constructing such a vector is stated as Problem 1 in \cite[pg. 8]{Jones}
(see also \cite{Ittersum}), where it is also shown that a tracelike and
wandering vector provides a cyclic and separating vector for the von Neumann
algebra. It is the purpose of this paper to construct this `magic function' $%
\Phi \in \mathcal{A}_{12}$ as a genu\'{\i}nely computable vector: first the
function $\Phi $ is constructed in Theorem \ref{main}; then, in Theorem \ref%
{finitecertificate}, $\Phi $ is approximated by finite sums $\Xi _{N}$, with
every finite approximation constructed using exact algebraic arithmetic. We
provide a finite computable error certificate for the approximation. To
obtain this certificate, the precise value $\alpha =12$ will be used. This
allows to compute all constants in $\mathbb{Q}\left( \zeta _{12}\right) =%
\mathbb{Q}\left( \sqrt{3},i\right) $, the algebraic field generated over $%
\mathbb{Q}$ by the $12th$-rooth of unity%
\begin{equation*}
\zeta _{12}=e^{2\pi i/12}=e^{\pi i/6}=\frac{\sqrt{3}+i}{2}\text{.}
\end{equation*}

\begin{figure}[!b]
\centering
\includegraphics[width=0.72\textwidth]{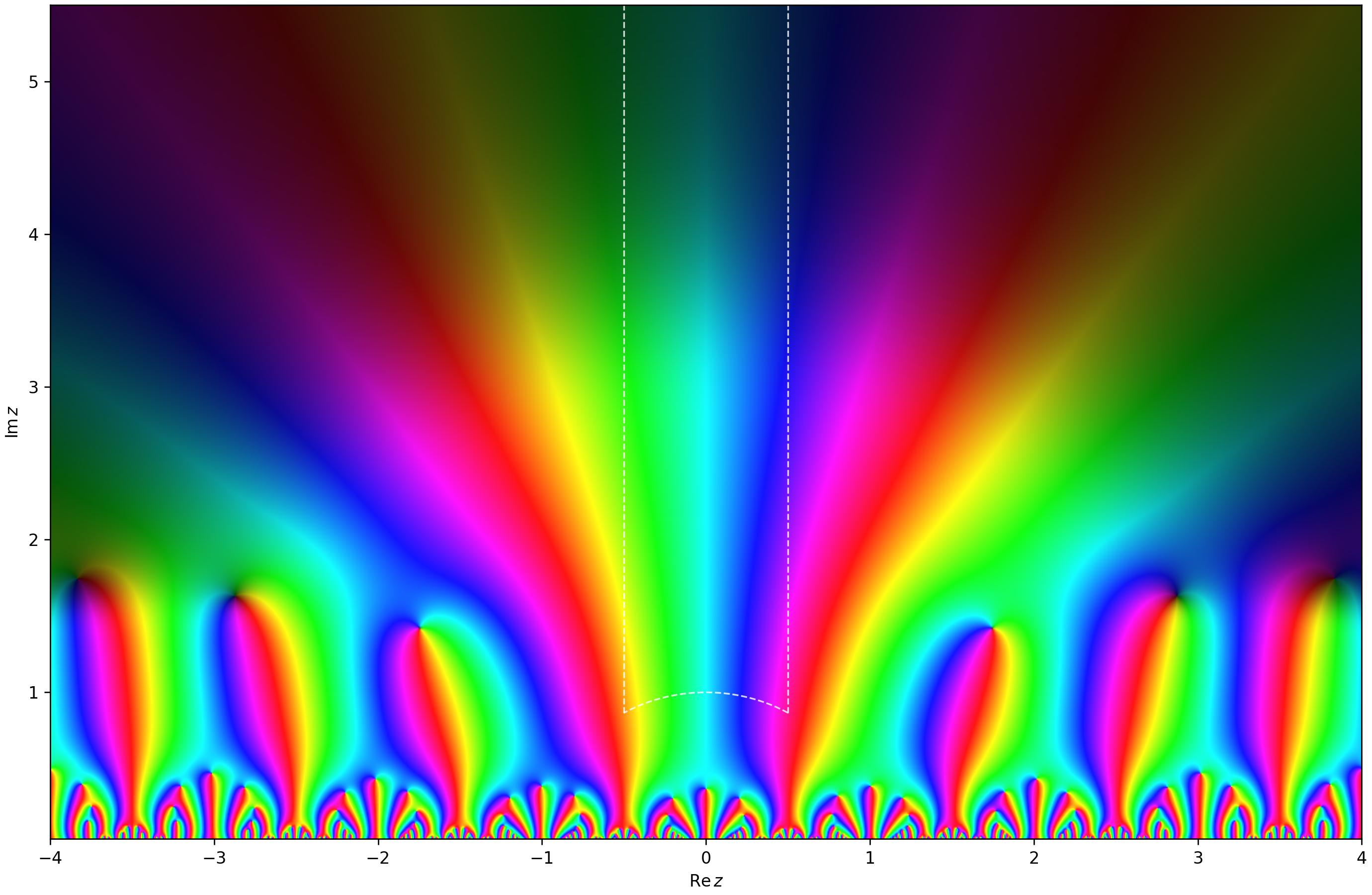}
\caption{The magic function in the upper half-plane, represented by
$\Phi _{m}=U_{m}\delta _{e}$ for $m=12$, with $%
|\mathcal{F}_{m}|=730$. Colors represent $\arg \Phi _{m}(z)$ and brightness
represents $|\Phi _{m}(z)|$. The dashed curve indicates the standard modular
fundamental domain. Reminiscences of modularity
can be observed, despite $\Phi _{m}$ not being automorphic.}
\label{MagicFunctionPlot}
\end{figure}

The computable function $\Phi $ is wandering, and a trace, cyclic and
separating vector for $M_{12}$. In Figure 1, one can recognize some global
modular features (the patterns inside the fundamental domain are replicated,
up to some distortion) combined with those of the Bergman kernel. This
happens because its construction can be informally seen as a mechanism of
(almost) modularization and orthogonalization of the Bergman kernel. Once $%
\Phi $ is obtained, we can define the operator (see \cite[Lemma 3.2.2]%
{GoodmanDeLaHarpeJones})%
\begin{equation*}
J_{\Phi }\rho _{12}(\gamma )\Phi =\rho _{12}(\gamma ^{-1})\Phi 
\end{equation*}%
and extend it conjugate-linearly. Then $J_{\Phi }$ satisfies $J_{\Phi
}M_{12}J_{\Phi }=M_{12}^{\prime }$. Thus, the map%
\begin{equation*}
\beta (\varkappa )=J_{\Phi }\varkappa ^{\ast }J_{\Phi }
\end{equation*}%
defines a linear $\ast $-anti-isomorphism%
\begin{equation*}
\beta :M_{12}\rightarrow M_{12}^{\prime }\text{.}
\end{equation*}

We observe that R\u{a}dulescu's results from \cite{Radulescu} have been
recently extended to general Fuchsian groups \cite{Yang}. This paper
strictly concerns the case $\alpha =12$, but the construction in Theorem \ref%
{main} extends to general Fuchsian groups. However, extending Theorem \ref%
{finitecertificate} is more delicate, since we have used ingredients which
are automatically granted for $\func{PSL}(2,\mathbb{Z})$, such as a
computable matrix realization and computability of the Gramian entries with
certified precision, among other properties. Thus, we are not in a position
of making claims in this direction, but believe that, at least for some
groups, the method may work with some adaptations, and that the problem is
worth being pursued.

A much more difficult problem arises if one considers the action of $\func{%
PSL}(2,\mathbb{Z})$ or other Fuchsian groups in non-holomorphic spaces, as
it naturally occurs for Maass forms \cite{Maass}. A first step in this
direction was taken in \cite{Affine}, where Jones dimension formula was
extended to algebras generated by an infinite family of projective
representations acting in non-holomorphic spaces, of which the discrete
eigenspaces of the Maass Laplacian are a special case \cite[pg. 8]{Affine}.
As a result of the von Neumann dimension formula proved in \cite{Affine},
Theorem \ref{Jones} extends to this setting (just mimic the proof of \cite[%
Theorem 9.7]{Jones}). Thus, there is a wandering and tracelike vector for
the whole infinite family of non-holomorphic spaces considered in \cite%
{Affine}. \textbf{Problem: }\emph{find one}. In this case, an attempt to
generalize our methods will face significant obstructions. For instance,
Jones' \cite[Theorem 1.1]{Jones} and Seip's \cite{Seip} sufficient
conditions for completeness, required in our construction, completely depend
on the holomorphy of $\mathcal{A}_{\alpha }$.

\subsection{Outline of the paper}

The paper is organized as follows. The next section gathers some preparation
results, including the restriction of $\Gamma $ to finite subsets of
increasing `radius', which will be used in the construction of $\Phi $. The
construction itself is made on Section 3 and the effective procedure for its
computation is described in Section 4.

\section{Preliminary results and proof strategy}

\subsection{A sequence of finite subsets $\mathcal{F}_{m}\subset \Gamma $}

From this point through Section \ref{effectiveSection}, assume the critical
value $\alpha =12$, while retaining $\alpha $ in many of the formulas for
typographical reasons. For 
\begin{equation*}
M=%
\begin{pmatrix}
a & b \\ 
c & d%
\end{pmatrix}%
\in \limfunc{SL}(2,\mathbb{Z})
\end{equation*}%
put 
\begin{equation*}
R(M)=\max \{|a|,|b|,|c|,|d|\}\text{.}
\end{equation*}%
Since $R(M)=R(-M)$, this descends to a well defined function on $\Gamma =%
\func{PSL}(2,\mathbb{Z})$, namely 
\begin{equation*}
R(\gamma )=R(M)\text{.}
\end{equation*}%
For $m\geq 1$, let 
\begin{equation}
\mathcal{F}_{m}=\{\gamma \in \Gamma :R(\gamma )\leq m\}\text{.}  \label{Fm}
\end{equation}%
The sets $\mathcal{F}_{m}$ are finite, symmetric and increasing, and their
union is $\Gamma $.

\subsection{The Gram operator}

Given $\gamma ,\eta \in \Gamma $, write 
\begin{equation}
\sigma (\gamma ^{-1}\eta )=%
\begin{pmatrix}
a_{\gamma ^{-1}\eta } & b_{\gamma ^{-1}\eta } \\ 
c_{\gamma ^{-1}\eta } & d_{\gamma ^{-1}\eta }%
\end{pmatrix}%
\text{.}  \label{relativeproduct}
\end{equation}%
Thus the matrix in (\ref{relativeproduct}) is the canonical representative
of the relative group element $\gamma ^{-1}\eta $. The \emph{Gram operator}
is the operator $G$, whose entries are given by%
\begin{equation}
\left( G\right) _{\gamma ,\eta }=\left\langle \rho _{\alpha }(\gamma
)e_{0}^{\alpha },\rho _{\alpha }(\eta )e_{0}^{\alpha }\right\rangle _{%
\mathcal{A}_{\alpha }},\qquad \gamma ,\eta \in \Gamma \text{.}
\label{GramianEntries}
\end{equation}%
\ Our construction will use, in a first step, the element $n=0$ of (\ref%
{basis}): 
\begin{equation*}
e_{0}^{\alpha }(z)=\left( \frac{2i}{z+2i}\right) ^{\alpha +1}\text{.}
\end{equation*}%
The action of 
\begin{equation*}
\sigma (\gamma )=%
\begin{pmatrix}
a_{\gamma } & b_{\gamma } \\ 
c_{\gamma } & d_{\gamma }%
\end{pmatrix}%
\text{,}
\end{equation*}%
on $e_{0}^{\alpha }(z)$ is explicitly given as 
\begin{equation}
\left( \rho _{\alpha }(\gamma )e_{0}^{\alpha }\right) (z)=\chi (\sigma
(\gamma ))^{\alpha +1}\left( \frac{2i}{(d_{\gamma }-2ic_{\gamma
})z+2ia_{\gamma }-b_{\gamma }}\right) ^{\alpha +1}\text{.}
\label{orbitHalfPlane}
\end{equation}%
At $\alpha =12$, one has $\chi (\sigma (\gamma ))^{\alpha +1}=\chi (\sigma
(\gamma ))$. In order to obtain a bound for the Gram operator, the
coefficients $\left\langle e_{0}^{\alpha },\rho _{\alpha }(\gamma
)e_{0}^{\alpha }\right\rangle $ need to be estimated. First we show that $%
\left\langle e_{0}^{\alpha },\rho _{\alpha }(\gamma )e_{0}^{\alpha
}\right\rangle =2^{\alpha +1}\left( \rho _{\alpha }(\gamma )e_{0}^{\alpha
}\right) \left( 2i\right) $: set $F=\rho _{\alpha }(\gamma )e_{0}^{\alpha }$
in $\left\langle e_{0}^{\alpha },F\right\rangle =\left\langle K_{\alpha
}(.,2i),F\right\rangle =2^{\alpha +1}F(2i)$, after observing that $K_{\alpha
}(z,2i)=2^{-(\alpha +1)}e_{0}^{\alpha }(z)$. Together with (\ref%
{orbitHalfPlane}), this leads to the identity 
\begin{equation*}
\left\vert \left\langle e_{0}^{\alpha },\rho _{\alpha }(\gamma
)e_{0}^{\alpha }\right\rangle \right\vert =\left\vert \frac{4}{2(a_{\gamma
}+d_{\gamma })-i(4c_{\gamma }-b_{\gamma })}\right\vert ^{\alpha +1}=\left( 
\frac{16}{Q_{\sigma (\gamma )}}\right) ^{1/2}\left( \frac{16}{Q_{\sigma
(\gamma )}}\right) ^{\alpha /2}\text{,}
\end{equation*}%
with $Q_{\sigma (\gamma )}=$\ $4(a_{\gamma }+d_{\gamma })^{2}+(4c_{\gamma
}-b_{\gamma })^{2}$. Since $Q_{\sigma (\gamma )}\geq 16$ and $Q_{\sigma
(\gamma )}\geq \max \{|a_{\gamma }|^{2},|b_{\gamma }|^{2},|c_{\gamma
}|^{2},|d_{\gamma }|^{2}\}=R(\sigma (\gamma ))^{2}$, we have 
\begin{equation}
\left\vert \langle e_{0}^{\alpha },\rho _{\alpha }(\gamma )e_{0}^{\alpha
}\rangle \right\vert \leq 4^{\alpha }R(\sigma (\gamma ))^{-\alpha }\text{.}
\label{Gramdecay}
\end{equation}%
There are at most $(2r+1)^{4}-(2r-1)^{4}\leq 80r^{3}$ integer matrices whose
maximum entry modulus equals $r$. Hence, for $\alpha =12$ and every integer $%
M\geq 1$, 
\begin{equation}
\sum_{\gamma \in \Gamma }\left\vert \langle e_{0}^{\alpha },\rho _{\alpha
}(\gamma )e_{0}^{\alpha }\rangle \right\vert \leq 4^{\alpha
}80\sum_{r=1}^{\infty }r^{3-\alpha }\leq 4^{\alpha }80\frac{9}{8}\leq
4^{12}90\text{.}  \label{tailbound}
\end{equation}%
Since $\left\vert (G)_{\gamma ,\eta }\right\vert =\left\vert \langle \rho
_{\alpha }(\gamma ^{-1}\eta )e_{0}^{\alpha },e_{0}^{\alpha }\rangle
\right\vert $, 
\begin{equation*}
\sum_{\gamma \in \Gamma }\left\vert (G)_{\gamma ,\eta }\right\vert \leq
4^{12}90\text{, \ \ \ }\eta \in \Gamma \text{.}
\end{equation*}%
The same inequality holds with the roles of $\gamma $ and $\eta $\ reversed.
Thus, we can first define $G$ for $c$ supported in $\mathcal{F}_{m}$\ and
Schur's test gives, writing $L=4^{12}100$,%
\begin{equation}
\Vert Gc\Vert _{\ell ^{2}(\Gamma )}\leq 4^{12}90\Vert c\Vert _{\ell
^{2}(\Gamma )}<L\Vert c\Vert _{\ell ^{2}(\Gamma )}\text{,}
\label{SchurGramian}
\end{equation}%
so that $G$ extends to a bounded operator $G:\ell ^{2}(\Gamma )\rightarrow
\ell ^{2}(\Gamma )$.

\subsection{The synthesis operator}

The \emph{synthesis operator} of the complete sequence $\{\rho _{\alpha
}(\gamma )e_{0}^{\alpha }\}_{\gamma \in \Gamma }$ is first defined for $c$
supported in $\mathcal{F}_{m}$\ as 
\begin{equation}
Ac=\sum_{\gamma \in \Gamma }c_{\gamma }\rho _{\alpha }(\gamma )e_{0}^{\alpha
}\text{.}  \label{synthesis}
\end{equation}%
For such finitely supported $c$, (\ref{SchurGramian}) gives%
\begin{equation*}
\Vert Ac\Vert ^{2}=\left\langle c,Gc\right\rangle \leq 4^{12}90\Vert c\Vert
^{2}\text{,}
\end{equation*}%
so $A$ extends to a bounded operator $A:\ell ^{2}(\Gamma )\rightarrow 
\mathcal{A}_{\alpha }$ , with $G=A^{\ast }A$. Then 
\begin{equation}
0\leq G<L\,I,\qquad \Vert A\Vert ^{2}=\Vert G\Vert <\sqrt{L}=40960\text{.}
\label{operatorbound}
\end{equation}%
We can already see why we have chosen $L=4^{12}100$ in \ref{SchurGramian}\
it bounds $\Vert A\Vert $ by the integer $\sqrt{L}=40960$, so that $L^{-1/2}$
is a rational number. This will be essential in the last section. The
following Lemma gathers the required general results about $A$, $U$ and $G$.

\begin{lemma}
\label{injective} Assume $\alpha =12$. The synthesis operator $A:\ell
^{2}(\Gamma )\rightarrow \mathcal{A}_{\alpha }$ is injective and has dense
range. In the polar decomposition 
\begin{equation}
A=U\left\vert A\right\vert \text{, \ \ }G=A^{\ast }A\text{,}  \label{polar}
\end{equation}%
the polar factor $U:l^{2}(\Gamma )\rightarrow \mathcal{A}_{\alpha }$ is
unitary. Moreover, the restriction 
\begin{equation}
U:\left( \ker A\right) ^{\perp }\rightarrow \mathcal{A}_{\alpha }
\label{restriction}
\end{equation}%
is an unitary module map between the corresponding $M$-modules.
\end{lemma}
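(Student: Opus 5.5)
The plan is to exploit equivariance. Since $\rho_{12}$ is a genuine unitary representation of $\Gamma$, for finitely supported $c$ we have
\begin{equation*}
A\lambda(\gamma)c=\sum_{\eta}c_{\eta}\rho_{\alpha}(\gamma\eta)e_0^{\alpha}=\rho_{\alpha}(\gamma)Ac .
\end{equation*}
By boundedness of $A$ (see (\ref{operatorbound})) this extends to $A\lambda(\gamma)=\rho_\alpha(\gamma)A$ on all of $\ell^2(\Gamma)$. Taking adjoints gives $A^*\rho_\alpha(\gamma)=\lambda(\gamma)A^*$, and hence $G=A^*A$ commutes with every $\lambda(\gamma)$. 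This can also be read off directly from (\ref{GramianEntries}), since $(G)_{\gamma,\eta}$ depends only on $\gamma^{-1}\eta$. It follows that $|A|=G^{1/2}$ commutes with $\lambda(\Gamma)$, and so does the projection onto $\ker A=\ker|A|$. Hence that projection lies in $L(\Gamma)'$, which is the right group von Neumann algebra.

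\textbf{Dense range.} This is immediate. The range of $A$ contains every finite linear combination of the vectors $\rho_\alpha(\gamma)e_0^\alpha$, and by (\ref{denseorbit}) these are dense in $\mathcal{A}_{12}$.

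\textbf{Injectivity.} This is the main step, and I would argue by von Neumann dimension.

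1. Take the polar decomposition $A=U|A|$. Then $U$ is a partial isometry from $(\ker A)^\perp$ onto $\overline{\operatorname{ran}A}=\mathcal{A}_{12}$.

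2. Show that $U$ intertwines the actions. From $\lambda(\gamma)|A|\lambda(\gamma)^{*}=|A|$ and $\rho_\alpha(\gamma)A\lambda(\gamma)^{*}=A$ we get $\rho_\alpha(\gamma)U\lambda(\gamma)^{*}\,|A|=A$. By uniqueness of the polar decomposition (same initial space), $\rho_\alpha(\gamma)U\lambda(\gamma)^*=U$, that is, $U\lambda(\gamma)=\rho_\alpha(\gamma)U$.

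3. Consequently $\lambda(\gamma)\mapsto\rho_\alpha(\gamma)$ extends to a normal isomorphism $L(\Gamma)\to M_{12}$, obtained by compressing to the invariant subspace $(\ker A)^\perp$ and transporting by $U$. Under this identification the restriction (\ref{restriction}) is a unitary equivalence of modules.

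4. Therefore
\begin{equation*}
\dim_{L(\Gamma)}(\ker A)^\perp=\dim_{M_{12}}\mathcal{A}_{12}=1
\end{equation*}
by (\ref{dimensionformula}). Since $\dim_{L(\Gamma)}\ell^2(\Gamma)=1$, additivity gives $\dim_{L(\Gamma)}\ker A=\tau(P_{\ker A})=0$.

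5. The trace $\tau(P)=\langle P\delta_e,\delta_e\rangle$ is faithful on $L(\Gamma)'$, so $P_{\ker A}=0$ and $A$ is injective.

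The delicate point I expect is step 3: justifying that the dimension computed with respect to $M_{12}$ on $\mathcal{A}_{12}$ agrees with the $L(\Gamma)$-dimension of $(\ker A)^\perp$. If $\lambda\mapsto\rho_\alpha$ were not yet known to be a normal isomorphism, I would instead note directly that $U$ implements a spatial equivalence between $\rho_\alpha$ and a subrepresentation of $\lambda$. Jones' dimension (\ref{dimensionformula}) is precisely the coupling constant of such a subrepresentation, namely $\tau(P_{(\ker A)^\perp})$.

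\textbf{Unitarity of $U$.} With $\ker A=0$, the initial space of $U$ is all of $\ell^2(\Gamma)$ and its final space is $\overline{\operatorname{ran}A}=\mathcal{A}_{12}$. Hence $U$ is unitary. The intertwining relation $U\lambda(\gamma)=\rho_{12}(\gamma)U$ from step 2 then gives the module-map property of (\ref{restriction}), which here coincides with $U$ itself.
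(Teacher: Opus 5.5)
Your proposal is correct and follows essentially the same route as the paper. Both arguments use the intertwining $A\lambda(\gamma)=\rho_\alpha(\gamma)A$, transfer it to the polar factor (the paper via density of $\operatorname{ran}|A|$ in $(\ker A)^\perp$ and $\lambda$-invariance of $\ker A$, and also via uniqueness of the polar decomposition, as you do), identify $\dim_{L(\Gamma)}(\ker A)^\perp=\dim_{M_{12}}\mathcal{A}_{12}=1$, and conclude $\ker A=\{0\}$ and unitarity of $U$ by additivity and faithfulness; your observation that $P_{\ker A}\in L(\Gamma)'$, where $\tau(P)=\langle P\delta_e,\delta_e\rangle$ is faithful, just makes explicit the faithfulness step the paper cites from Jones.
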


\noindent \textit{Proof.}\ By (\ref{denseorbit}), $\overline{ranA}=\mathcal{A%
}_{12}$. Since $\left\vert A\right\vert =\left( A^{\ast }A\right)
^{1/2}=G^{1/2}$, the polar decomposition of $A$ is 
\begin{equation*}
A=U\left\vert A\right\vert =UG^{1/2}\text{,}
\end{equation*}%
where $U$ is the partial isometric polar factor, defined as a unique partial
isometry (it acts isometrically on one closed subspace and vanishes on its
orthogonal complement), whose initial and final spaces are 
\begin{equation*}
\overline{ran\left\vert A\right\vert }=\left( \ker A\right) ^{\perp }\text{,
\ \ \ }\overline{ranA}=\mathcal{A}_{\alpha }\text{,}
\end{equation*}%
respectively. The action on the canonical basis vector $\delta _{\eta }$ is $%
A\delta _{\eta }=\rho _{\alpha }(\eta )e_{0}^{\alpha }$. Combining this with 
$A\lambda (\gamma )\delta _{\eta }=A\delta _{\gamma \eta }$, we see that the
synthesis\ operator intertwines the ordinary left regular representation of $%
\Gamma $ with the representation $\rho _{\alpha }$: 
\begin{equation}
A\lambda (\gamma )=\rho _{\alpha }(\gamma )A\text{.}  \label{intertwining}
\end{equation}%
Consequently, $\lambda (\gamma )^{\ast }G\lambda (\gamma )=A^{\ast }A=G$.
Thus, $G$ and, consequently, $\left\vert A\right\vert =G^{1/2}$, commute
with $\lambda (\gamma )$. It follows from $A=U\left\vert A\right\vert $ and (%
\ref{intertwining}) that $\left( U\lambda (\gamma )-\rho _{\alpha }(\gamma
)U\right) \left\vert A\right\vert =0$. Since $ran\left\vert A\right\vert $
is dense in $\left( \ker A\right) ^{\perp }$ and $\ker A$ is invariant under 
$\lambda (\gamma )$, this implies%
\begin{equation}
U\lambda (\gamma )=\rho _{\alpha }(\gamma )U\text{, \ \ \ }\gamma \in \Gamma 
\text{.}  \label{polarintertwining}
\end{equation}%
Thus, the restriction (\ref{restriction}) is an unitary module map between $%
\left( \ker A\right) ^{\perp }$ and $\mathcal{A}_{\alpha }$\ as $M$-modules.
Unitary invariance of von Neumann dimension \cite[Proposition 3.2.4 a)]%
{GoodmanDeLaHarpeJones}, implies that $\dim _{M}\left( \ker A\right) ^{\perp
}=\dim _{M_{\alpha }}\mathcal{A}_{\alpha }$. Since we know from (\ref%
{dimensionformula}) that $\dim _{M_{\alpha }}\mathcal{A}_{\alpha }=1$, when $%
\alpha =12$, $\dim _{M}\left( \ker A\right) ^{\perp }=\dim _{M_{\alpha }}%
\mathcal{A}_{\alpha }=1$. Thus, $\dim _{M}(l^{2}(\Gamma ))=1$ and additivity
of von Neumann dimension gives 
\begin{equation*}
1=\dim _{M}(\ker A)+\dim _{M}\left( \ker A\right) ^{\perp }=\dim _{M}(\ker
A)+1
\end{equation*}%
and we conclude that $\dim _{M}(\ker A)=0$. Since von Neumann dimension is
faithful on closed Hilbert $M$-submodules, this implies $\ker A=\{0\}$ (the
required dimension properties are recalled in \cite[Section 3]{Jones}). Thus
the initial and final spaces of the polar factor are the whole source and
target, so $U:l^{2}(\Gamma )\rightarrow \mathcal{A}_{\alpha }$ is unitary at 
$\alpha =12$. Uniqueness of polar decomposition and (\ref{intertwining})
show that $U$ intertwines the representations. \hfill $\square $

\subsection{The intuition behind the proofs}

The idea behind the proof has a simple intuition. Once we are able to define
the magic function by $\Phi =U\delta _{e}$, then%
\begin{equation*}
\left\langle \rho _{\alpha }(\gamma )\Phi ,\rho _{\alpha }(\eta )\Phi
\right\rangle =\left\langle U\delta _{\gamma },U\delta _{\eta }\right\rangle
=\left\langle \delta _{\gamma },\delta _{\eta }\right\rangle =\delta
_{\gamma ,\eta }\text{.}
\end{equation*}%
Thus, the problem can be solved if we find a way of determining $U$ from $%
A=UG^{1/2}$. This becomes non-trivial because we don't know if the Gramian $G
$ of the complete sequence $\{\rho _{\alpha }(\gamma )e_{0}^{\alpha
}\}_{\gamma \in \Gamma }$, is invertible. To overcome this difficulty, we
use the finite subsets $\mathcal{F}_{m}\subset \Gamma $ (\ref{Fm}). These
sets can informally be seen as `circles' containing the subsets of $\Gamma $%
\ up to a prescribed `radius' depending on $m$, which will later made large.
The synthesis and Gramian operators of $\{\rho _{\alpha }(\gamma
)e_{0}^{\alpha }\}_{\gamma \in \Gamma }$ are then restricted to $\mathcal{F}%
_{m}$\ and the resulting finite-dimensional operators produce a sequence of
Gram matrices $G_{m}$ and polar partial isometries $U_{m}$, such that $%
U_{m}\rightarrow U$ strongly, providing therefore the sought function $\Phi $
as the local uniform limit of $U_{m}\delta _{e}$. This provides the main
result of the next section. In the last section of the paper, we truncate
the binomial formula to obtain a polynomial approximation to $t^{-1/2}$ and
turn this approximation into a sequence of polynomial vectors $P_{N}$
converging in $\mathcal{A}_{\alpha }$ to $\Phi =U\delta _{e}$. Then we
introduce a sequence of functions $\Xi _{N}$ involving only finite
computable matrices, such that $\left\Vert \Xi _{N}-P_{N}\right\Vert _{%
\mathcal{A}_{\alpha }}<2^{-N}$. The construction culminates in showing that $%
\left\Vert \Phi -\Xi _{N}\right\Vert \rightarrow 0$\ effectively, (with a
computable bound). This leads to an \emph{efective procedure} \cite%
{Computability}, therefore demonstrating computability of $\Phi $.

\section{Construction of $\Phi $}

\subsection{Finite Gram matrices and the main result}

Define \emph{the finite Gram matrix} $G_{m}$, indexed by $\mathcal{F}_{m}$,
by%
\begin{equation}
\left( G_{m}\right) _{\gamma ,\eta }=\left\langle \rho _{\alpha }(\gamma
)e_{0}^{\alpha },\rho _{\alpha }(\eta )e_{0}^{\alpha }\right\rangle _{%
\mathcal{A}_{\alpha }},\qquad \gamma ,\eta \in \mathcal{F}_{m}.
\label{gramFinite}
\end{equation}%
Writing the relative representative as in (\ref{relativeproduct}), using (%
\ref{orbitHalfPlane}) and $\chi ^{12}=1$, its entries at $\alpha =12$ are
explicitly given by 
\begin{equation}
(G_{m})_{\gamma ,\eta }=\chi \left( \sigma (\gamma ^{-1}\eta )\right) \left( 
\frac{4}{2(a_{\gamma ^{-1}\eta }+d_{\gamma ^{-1}\eta })-i(4c_{\gamma
^{-1}\eta }-b_{\gamma ^{-1}\eta })}\right) ^{13},\qquad \gamma ,\eta \in 
\mathcal{F}_{m}.  \label{explicitCriticalGram}
\end{equation}%
In particular, 
\begin{equation*}
(G_{m})_{\gamma ,\gamma }=1,\qquad (G_{m})_{\gamma ,\eta }=\overline{%
(G_{m})_{\eta ,\gamma }}.
\end{equation*}

\begin{theorem}
\label{main} Assume $\alpha =12$. Every $G_{m}$ is positive definite. Let 
\begin{equation*}
v_{m}=G_{m}^{-1/2}\delta _{e}
\end{equation*}%
and 
\begin{equation}
\Phi _{m}(z):=\sum_{\gamma \in \mathcal{F}_{m}}v_{m}(\gamma )\left( \rho
_{\alpha }(\gamma )e_{0}^{\alpha }\right) \left( z\right) \text{.}
\label{Phim}
\end{equation}%
There exists a unit vector $\Phi \in \mathcal{A}_{\alpha }$ such that 
\begin{equation}
\Phi _{m}\longrightarrow \Phi \quad \text{in }\mathcal{A}_{\alpha }\quad 
\text{locally uniformly on }\mathbb{H}  \label{Philimit}
\end{equation}%
and such that its modular orbit is an orthonormal basis of $\mathcal{A}%
_{\alpha }$.
\end{theorem}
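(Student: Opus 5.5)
The plan is to realize each $\Phi_m$ as the image of $\delta_e$ under the polar factor of a truncated synthesis operator. I would then prove $\Phi_m\to U\delta_e$ through weak convergence plus convergence of norms, which avoids any need to control $G_m^{-1/2}$ uniformly in $m$.

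\emph{Positive definiteness.} Let $P_m$ be the orthogonal projection of $\ell^2(\Gamma)$ onto $\ell^2(\mathcal{F}_m)$ and set $A_m=AP_m$. For $c\in\ell^2(\mathcal{F}_m)$ we have $\langle c,G_mc\rangle=\|Ac\|^2$. By Lemma \ref{injective}, $A$ is injective, so this quantity is strictly positive whenever $c\neq 0$. Hence $G_m$ is positive definite and $G_m^{-1/2}$ makes sense on $\ell^2(\mathcal{F}_m)$.

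\emph{Identification of $\Phi_m$.} By definition, $\Phi_m=A_mG_m^{-1/2}\delta_e$. Since $A_m^*A_m=G_m$ on $\ell^2(\mathcal{F}_m)$, the operator $U_m:=A_mG_m^{-1/2}$ is an isometry $\ell^2(\mathcal{F}_m)\to\mathcal{A}_\alpha$. It is precisely the polar factor of $A_m$, and $\Phi_m=U_m\delta_e$. In particular $\|\Phi_m\|=1$ for every $m$, since $e\in\mathcal{F}_m$ for all $m$.

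\emph{Weak convergence to $\Phi:=U\delta_e$.} Let $c$ be finitely supported. For $m$ large, $c$ is supported in $\mathcal{F}_m$, and then
\[
\langle \Phi_m,Ac\rangle=\langle G_m^{-1/2}\delta_e,G_mc\rangle=\langle\delta_e,G_m^{1/2}c\rangle .
\]
The operators $P_mGP_m$ converge strongly to $G$ and are uniformly bounded by $L$, by (\ref{operatorbound}). The square root is continuous on $[0,L]$ and can be approximated uniformly there by polynomials, so $(P_mGP_m)^{1/2}=G_m^{1/2}\oplus 0$ converges strongly to $G^{1/2}$. Therefore
\[
\langle\Phi_m,Ac\rangle\to\langle\delta_e,G^{1/2}c\rangle=\langle U\delta_e,UG^{1/2}c\rangle=\langle\Phi,Ac\rangle,
\]
using that $U$ is unitary. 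Vectors of the form $Ac$ with $c$ finitely supported are dense in $\mathcal{A}_{12}$ by (\ref{denseorbit}), and the $\Phi_m$ are uniformly bounded. Hence $\Phi_m\to\Phi$ weakly.

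\emph{Upgrade to norm and locally uniform convergence.} Since $U$ is unitary, $\|\Phi\|=\|\delta_e\|=1=\|\Phi_m\|$. Weak convergence together with convergence of norms gives norm convergence in $\mathcal{A}_\alpha$. Local uniform convergence then follows from the reproducing kernel estimate $|F(z)|\le\|F\|\,K_\alpha(z,z)^{1/2}=\|F\|(2\operatorname{Im}z)^{-(\alpha+1)/2}$, which is uniformly bounded on compact subsets of $\mathbb{H}$.

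\emph{Orthonormal basis.} By the intertwining relation (\ref{polarintertwining}), $\rho_\alpha(\gamma)\Phi=U\lambda(\gamma)\delta_e=U\delta_\gamma$. Because $U$ is unitary and $\{\delta_\gamma\}$ is an orthonormal basis of $\ell^2(\Gamma)$, the orbit $\{\rho_\alpha(\gamma)\Phi\}_{\gamma\in\Gamma}$ is an orthonormal basis of $\mathcal{A}_{12}$.

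\emph{Main obstacle.} The tempting direct route is to bound $\|U_m\delta_e-U\delta_e\|$ through $G_m^{-1/2}$. This fails without uniform control of the spectral mass of $\delta_e$ near $0$, because $G$ need not be invertible. The weak-convergence-plus-norm argument is meant to sidestep this. The one point needing care there is the strong convergence of $G_m^{1/2}\oplus0$ to $G^{1/2}$, which I would justify by polynomial approximation of $\sqrt{t}$ on $[0,L]$ together with the uniform bound (\ref{operatorbound}).
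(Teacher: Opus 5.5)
Your proof is correct, and it takes a different route from the paper for the convergence step. Several steps coincide with the paper's:
\begin{itemize}
\item the positive-definiteness argument via injectivity of $A$;
\item the identification $\Phi_m=U_m\delta_e$, with $U_m$ the polar factor of $A_m=AP_m$;
\item the strong convergence $(P_mGP_m)^{1/2}\to G^{1/2}$ by polynomial approximation of $\sqrt{t}$ on $[0,L]$;
\item the reproducing-kernel bound;
\item the orthonormal-basis conclusion via the intertwining relation.
\end{itemize}

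Where you differ is in proving $\Phi_m\to\Phi$. The paper shows that $U_m\to U$ strongly on all of $\ell^2(\Gamma)$. It writes $(U_m-U)|A|x=U_m(|A|-|A_m|)x+(A_m-A)x\to 0$. It then uses that $\operatorname{ran}|A|$ is dense in $\ell^2(\Gamma)$, because $A$ is injective, together with $\|U_m\|\le 1$.

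You instead test $\Phi_m$ against the dense set $\{Ac\}$ in $\mathcal{A}_{12}$ given by the completeness (\ref{denseorbit}), which gives weak convergence to $U\delta_e$. You then upgrade to norm convergence using $\|\Phi_m\|=1=\|\Phi\|$.

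Your route exploits the fact that each $\Phi_m$ is already a unit vector. It also separates the roles of the hypotheses. The weak-limit identification only needs $U^*U$ to fix $G^{1/2}c$, which holds for any polar factor. The norm upgrade is where $\|U\delta_e\|=1$, and hence injectivity, is really used.

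The paper's route delivers strong convergence of the whole sequence of partial isometries directly, so $U_m\delta_\gamma\to U\delta_\gamma$ for every $\gamma$ at once. Your argument extends to that as well, since $\|U_mx\|=\|x\|=\|Ux\|$ for any finitely supported $x$ once $m$ is large.
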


\subsection{\textit{Proof of Theorem \protect\ref{main}.}}

If $c\neq 0$ is supported on $\mathcal{F}_{m}$, then%
\begin{equation*}
U(G^{1/2}c)=Ac=\sum_{\gamma \in \mathcal{F}_{m}}c(\gamma )\left( \rho
_{\alpha }(\gamma )e_{0}^{\alpha }\right) \text{.}
\end{equation*}%
By Lemma \ref{injective}, $A:\ell ^{2}(\Gamma )\rightarrow \mathcal{A}%
_{\alpha }$ is injective, thus $Ac\neq 0$ and 
\begin{equation*}
0<\Vert Ac\Vert _{\mathcal{A}_{\alpha }}^{2}=\Vert A_{m}c\Vert _{\mathcal{A}%
_{\alpha }}^{2}=c^{\ast }G_{m}c\text{,}
\end{equation*}%
where $A_{m}=AP_{m}$ and $P_{m}$ is the coordinate projection onto $\ell
^{2}(\mathcal{F}_{m})$. We conclude that $G_{m}$ is positive definite.
Extend the polar partial isometry $U_{m}$ of $A_{m}$ by zero on its initial
kernel. Now, since $e\in \mathcal{F}_{m}$, we can define $\Phi _{m}$ as 
\begin{equation*}
\Phi _{m}=U_{m}\delta _{e}=A(G_{m}^{-1/2}\delta _{e})\text{.}
\end{equation*}%
Thus, 
\begin{equation*}
\Phi _{m}(z)=\sum_{\gamma \in \mathcal{F}_{m}}v_{m}(\gamma )\left( \rho
_{\alpha }(\gamma )e_{0}^{\alpha }\right) (z)\text{,}
\end{equation*}%
where $v_{m}(\gamma )=(G_{m}^{-1/2})_{\gamma ,e}$. Both $A_{m}\rightarrow A$
and $A_{m}^{\ast }\rightarrow A^{\ast }$ strongly and are uniformly bounded.
Since $P_{m}\rightarrow I$ strongly, $P_{m}GP_{m}\rightarrow G$, also
strongly. By (\ref{SchurGramian}), the spectrum of both $P_{m}GP_{m}$ and $G$
are contained in $[0,L]$. Hence, uniform Weierstrass polynomial
approximation of the square-root function on $[0,L]$ yields, by functional
calculus, 
\begin{equation*}
\left\vert A_{m}\right\vert =(P_{m}GP_{m})^{1/2}\longrightarrow
G^{1/2}=\left\vert A\right\vert \quad \text{strongly.}
\end{equation*}%
For $x\in \ell ^{2}(\Gamma )$, 
\begin{equation*}
(U_{m}-U)\left\vert A\right\vert x=U_{m}(\left\vert A\right\vert -\left\vert
A_{m}\right\vert )x+(A_{m}-A)x\longrightarrow 0\text{.}
\end{equation*}%
The range of $\left\vert A\right\vert $ is dense because $A$ is injective,
while $\left\Vert U_{m}\right\Vert \leq 1$. Therefore $U_{m}\rightarrow U$
strongly, and we obtain 
\begin{equation*}
\Phi _{m}\longrightarrow \Phi :=U\delta _{e}\quad \text{in }\mathcal{A}%
_{\alpha }\text{.}
\end{equation*}%
Since the orbit of $\delta _{e}$ is the standard orthonormal basis,
unitarity gives 
\begin{equation*}
\left\langle \rho _{\alpha }(\gamma )\Phi ,\rho _{\alpha }(\eta )\Phi
\right\rangle =\left\langle U\delta _{\gamma },U\delta _{\eta }\right\rangle
=\left\langle \delta _{\gamma },\delta _{\eta }\right\rangle =\delta
_{\gamma ,\eta }
\end{equation*}%
and since we already know that $\{\rho _{\alpha }(\gamma )\Phi :\gamma \in
\Gamma \}$ is complete, we conclude that it is an orthonormal basis for $%
\mathcal{A}_{\alpha }$.

\section{Effective computation of $\Phi $}

\label{effectiveSection}

We want to approximate $\Phi =U\delta _{e}$. In the first step we
approximate the square root $t^{-1/2}$ by turning, via functional calculus,
the truncated binomial polynomial $p_{N}(t)$ into the sequence (\ref{etaN})
converging in $\mathcal{A}_{\alpha }$ to $\Phi =U\delta _{e}$. The main
computability theorem is then proved by approximating this sequence by
computable finite matrices, and bounding the approximation by a computable
constant.

Let $L=4^{12}100$. Setting $x=1-t/L$ in the binomial formula,%
\begin{equation*}
(1-x)^{-\frac{1}{2}}=\sum_{j=0}^{\infty }\binom{2j}{j}4^{-j}x^{j},\text{ \ \
\ \ \ }\left\vert x\right\vert <1\text{.}
\end{equation*}%
gives 
\begin{equation*}
t^{-1/2}=L^{-1/2}\sum_{j=0}^{\infty }\binom{2j}{j}4^{-j}\left( 1-\frac{t}{L}%
\right) ^{j},\text{ \ \ }t\in (0,L]\text{.}
\end{equation*}%
This suggests defining the polynomial 
\begin{equation}
p_{N}(t)=L^{-1/2}\sum_{j=0}^{N}\binom{2j}{j}4^{-j}\left( 1-\frac{t}{L}%
\right) ^{j}\text{,}  \label{polynomialEffective}
\end{equation}%
and the vector 
\begin{equation}
P_{N}=Ap_{N}(G)\delta _{e}\text{.}  \label{etaN}
\end{equation}%
Thus $p_{N}$ is a scalar polynomial approximation to $t^{-1/2}$. This will
be used via the functional calculus of the Gram operator. As a preparation
for our main result, we need to understand the convergence $%
G^{1/2}p_{N}(G)\delta _{e}\rightarrow \delta _{e}$.

\begin{lemma}
\label{polynomialcertificate} The vectors $P_{N}$ converge in $\mathcal{A}%
_{\alpha }$ to $\Phi =U\delta _{e}$, and 
\begin{equation}
\Vert \Phi -P_{N}\Vert _{\mathcal{A}_{\alpha }}^{2}\leq 1-\Vert P_{N}\Vert _{%
\mathcal{A}_{\alpha }}^{2}.  \label{etacertificate}
\end{equation}
\end{lemma}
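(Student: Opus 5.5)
Since $A=UG^{1/2}$ with $U$ unitary (Lemma \ref{injective}), we can write
\begin{equation*}
P_{N}=Ap_{N}(G)\delta _{e}=U\,G^{1/2}p_{N}(G)\delta _{e}=U\,q_{N}(G)\delta _{e},\qquad q_{N}(t)=t^{1/2}p_{N}(t).
\end{equation*}
Since $\Phi =U\delta _{e}$ and $U$ is unitary, this gives $\Vert \Phi -P_{N}\Vert =\Vert (I-q_{N}(G))\delta _{e}\Vert $ and $\Vert P_{N}\Vert =\Vert q_{N}(G)\delta _{e}\Vert $. Everything therefore reduces to scalar statements about $q_{N}$ on the spectrum of $G$, which lies in $[0,L]$ by (\ref{SchurGramian}). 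Let $\mu $ be the spectral measure of $G$ associated with $\delta _{e}$. It is a probability measure on $[0,L]$ because $\Vert \delta _{e}\Vert =1$. In terms of $\mu $,
\begin{equation*}
\Vert \Phi -P_{N}\Vert ^{2}=\int_{[0,L]}(1-q_{N}(t))^{2}\,d\mu (t),\qquad \Vert P_{N}\Vert ^{2}=\int_{[0,L]}q_{N}(t)^{2}\,d\mu (t).
\end{equation*}

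The key scalar fact is $0\leq q_{N}(t)\leq 1$ on $[0,L]$, with $q_{N}(t)\uparrow 1$ for $t\in (0,L]$. All binomial coefficients $\binom{2j}{j}4^{-j}$ are positive, and $1-t/L\in \lbrack 0,1]$ on $[0,L]$. Hence $p_{N}(t)$ is a partial sum of a nonnegative series that converges to $t^{-1/2}$ on $(0,L]$. It follows that $0\leq p_{N}(t)\leq t^{-1/2}$, so $0\leq q_{N}\leq 1$, and $q_{N}$ increases monotonically to $1$ on $(0,L]$. At $t=0$ we have $q_{N}(0)=0$, which is also consistent with these bounds.

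With $0\leq q\leq 1$, the pointwise inequality $(1-q)^{2}\leq 1-q^{2}$ holds, since it is equivalent to $2q^{2}-2q\leq 0$. Integrating against $\mu $ gives the certificate (\ref{etacertificate}) directly:
\begin{equation*}
\Vert \Phi -P_{N}\Vert ^{2}\leq \int (1-q_{N}^{2})\,d\mu =1-\Vert P_{N}\Vert ^{2}.
\end{equation*}

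For convergence, apply monotone or dominated convergence to $(1-q_{N})^{2}\leq 1$. This yields $\Vert \Phi -P_{N}\Vert ^{2}\rightarrow \mu (\{0\})$. The main obstacle is therefore to show that $\mu $ has no atom at $0$, i.e. that the spectral projection $E_{G}(\{0\})$ kills $\delta _{e}$. Here we use Lemma \ref{injective}: $A$ is injective, so $\ker G=\ker A^{\ast }A=\ker A=\{0\}$. Hence $E_{G}(\{0\})=0$ and $\mu (\{0\})=0$. Consequently $P_{N}\rightarrow \Phi $ in $\mathcal{A}_{\alpha }$.

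Note that we do not need $G$ to be invertible, and the argument gives no rate. That absence of a rate is exactly why the a posteriori certificate $1-\Vert P_{N}\Vert ^{2}$ is useful: it is a computable quantity that tends to $0$.
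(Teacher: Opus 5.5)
Your proof is correct and follows the paper's argument essentially step by step. You write $P_N=UG^{1/2}p_N(G)\delta_e$, reduce everything to the spectral measure of $\delta_e$, obtain the certificate from $0\le \sqrt{t}\,p_N(t)\le 1$ and the pointwise inequality $(1-u)^2\le 1-u^2$, and use injectivity of $G$ (from Lemma \ref{injective}) to exclude an atom at $0$, so that dominated convergence gives $P_N\to\Phi$. Your argument for the bound, via nonnegative increasing partial sums of the binomial series, spells out what the paper only asserts as ``clearly''.
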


\noindent \textit{Proof. }Clearly, 
\begin{equation*}
0\leq \sqrt{t}\,p_{N}(t)\leq 1,\qquad \sqrt{t}\,p_{N}(t)\longrightarrow
\left\{ 
\begin{array}{cc}
1, & t>0 \\ 
0, & t=0%
\end{array}%
\right. .
\end{equation*}%
Since $G$ is injective, the spectral measure of $\delta _{e}$ for $G$ has no
mass at zero and $\nu _{\delta _{e}}\left( \left[ 0,L\right] \right) =1$.
Thus, since $0<t\leq L$, when $N\rightarrow \infty $, 
\begin{equation*}
\left\Vert \left( G^{1/2}p_{N}(G)-I\right) \delta _{e}\right\Vert
^{2}=\int_{[0,L]}\left\vert \sqrt{t}\,p_{N}(t)-1\right\vert ^{2}d\nu
_{\delta _{e}}\rightarrow 0\text{.}
\end{equation*}%
Thus, the displayed norm goes to zero in dominated convergence and\ since $%
P_{N}=UG^{1/2}p_{N}(G)\delta _{e}$, so does $\Vert \Phi -P_{N}\Vert _{%
\mathcal{A}_{\alpha }}^{2}$. Now,%
\begin{equation*}
\left\Vert P_{N}-\Phi \right\Vert ^{2}=\left\Vert \left(
UG^{1/2}p_{N}(G)-U\right) \delta _{e}\right\Vert ^{2}=\left\Vert \left(
G^{1/2}p_{N}(G)-I\right) \delta _{e}\right\Vert ^{2}\text{.}
\end{equation*}%
Finally, observing that $(1-u)^{2}\leq 1-u^{2}$ for $0\leq u\leq 1$ and
applying this pointwise to $u=\sqrt{\lambda }\,p_{N}(\lambda )$\ gives%
\begin{eqnarray*}
\left\Vert P_{N}-\Phi \right\Vert ^{2} &=&\int_{[0,L]}\left\vert \sqrt{t}%
\,p_{N}(t)-1\right\vert ^{2}d\nu _{\delta _{e}} \\
&\leq &1-\int_{[0,L]}\left\vert \sqrt{t}\,p_{N}(t)\right\vert ^{2}d\nu
_{\delta _{e}} \\
&=&1-\Vert UG^{1/2}p_{N}(G)\delta _{e}\Vert _{\mathcal{A}_{\alpha }}^{2} \\
&=&1-\Vert P_{N}\Vert _{\mathcal{A}_{\alpha }}^{2}\text{.}
\end{eqnarray*}%
\hfill $\square $

The main theorem will now be proved by approximation the vectors $P_{N}$ by
computable finite matrices, and bounding the approximation by a computable
constant. For $N\geq 1$, let $G^{(M_{N})}$ be the operator defined as%
\begin{equation*}
\left( G^{(M_{N})}\right) _{\gamma ,\eta }=%
\begin{cases}
\left( G\right) _{\gamma ,\eta }, & R\left( \sigma (\gamma ^{-1}\eta
)\right) \leq M_{N}, \\ 
0, & R\left( \sigma (\gamma ^{-1}\eta )\right) >M_{N},%
\end{cases}%
\qquad \gamma ,\eta \in \Gamma \text{,}
\end{equation*}%
where $\left( G\right) _{\gamma ,\eta }$\ is the Gram coefficient (\ref%
{GramianEntries}). Choose $M_{N}$ such that 
\begin{equation}
\text{\ }L\,M_{N}^{-8}\leq \frac{2^{-N}}{(N+1)^{2}}\text{.}  \label{cutoffs}
\end{equation}
We will replace $p_{N}(G)\delta _{e}$ by a vector $p_{N}\left( B_{N}\right)
\delta _{e}$, where the finite matrix $B_{N}$ is the compression of $%
G^{(M_{N})}$ by $P_{R_{N}}$\ on $\ell ^{2}(\mathcal{F}_{R_{N}})$, defined as%
\begin{equation*}
B_{N}=P_{R_{N}}G^{(M_{N})}P_{R_{N}}\text{,}
\end{equation*}%
with the much larger radius $R_{N}=(2M_{N})^{N}$\ chosen so that the
compression does not affect paths of length at most $N$ beginning at $e$.
The entries of $B_{N}$ are 
\begin{equation}
\left( B_{N}\right) _{\gamma ,\eta }=%
\begin{cases}
\left( G\right) _{\gamma ,\eta }, & R\left( \sigma (\gamma ^{-1}\eta
)\right) \leq M_{N}, \\ 
0, & R\left( \sigma (\gamma ^{-1}\eta )\right) >M_{N},%
\end{cases}%
\qquad \gamma ,\eta \in \mathcal{F}_{R_{N}}\text{.}  \label{BN}
\end{equation}%
Define 
\begin{equation}
q_{N}=p_{N}\left( B_{N}\right) \delta _{e}=L^{-1/2}\sum_{j=0}^{N}\binom{2j}{j%
}4^{-j}\left( I-\frac{B_{N}}{L}\right) ^{j}\delta _{e}  \label{qN}
\end{equation}%
and extend $q_{N}$ by zero outside $\mathcal{F}_{R_{N}}$. The next step is
to compare $P_{N}$ with%
\begin{equation}
\Xi _{N}(z):=Aq_{N}(z)=\sum_{\gamma \in \mathcal{F}_{R_{N}}}q_{N}(\gamma
)(\rho _{\alpha }(\gamma )e_{0}^{\alpha })(z)  \label{Xi}
\end{equation}%
At $\alpha =12$, all entries of $B_{N}$, $q_{N}$, and the scalar
coefficients of $\Xi _{N}$ belong to the field $\mathbb{Q}(\zeta _{12})=%
\mathbb{Q}(i,\sqrt{3})$. Hence every finite approximation can be constructed
using exact algebraic arithmetic.

For the next result, denote by $G_{R_{N}}$ be the full Gram matrix on $%
\mathcal{F}_{R_{N}}$, define 
\begin{equation}
\nu _{N}=(q_{N}^{\ast }G_{R_{N}}q_{N})^{1/2}\text{,}  \label{EN}
\end{equation}%
and observe that $\nu _{N}$ is is obtained from a finite matrix calculation.
The following is the main result of this section.

\begin{theorem}
\textbf{\ }\label{finitecertificate} The finite approximants satisfy 
\begin{equation}
\Vert \Xi _{N}-P_{N}\Vert _{\mathcal{A}_{\alpha }}\leq 2^{-N}\text{.}
\label{Xieta}
\end{equation}%
Moreover, 
\begin{equation}
\Vert \Phi -\Xi _{N}\Vert _{\mathcal{A}_{\alpha }}\leq 2^{-N}+\sqrt{1-\left(
\max \{0,\nu _{N}-2^{-N}\}\right) ^{2}}\longrightarrow 0\text{,}
\label{effective}
\end{equation}%
is a finite computable error certificate. Moreover, for $K\subset \mathbb{H}$
compact, 
\begin{equation}
\sup_{z\in K}\left\vert \Phi (z)-\Xi _{N}(z)\right\vert \leq \sup_{z\in K}%
\sqrt{K_{\alpha }(z,z)}\left\Vert \Phi -\Xi _{N}\right\Vert _{\mathcal{A}%
_{\alpha }}\rightarrow 0\text{.}  \label{pointwise}
\end{equation}
Therefore, the norm certificate induces an explicit pointwise, locally
uniform error certificate. As a result, $\Phi $ is effectively computable as
a holomorphic function in $\mathbb{H}$.
\end{theorem}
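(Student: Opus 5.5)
I will prove (\ref{Xieta}) first, since the other two claims follow from it together with Lemma \ref{polynomialcertificate} and the reproducing kernel. Since $\Xi _{N}-P_{N}=A\bigl(q_{N}-p_{N}(G)\delta _{e}\bigr)$ and $\Vert A\Vert \leq \sqrt{L}$ by (\ref{operatorbound}), it suffices to bound $\Vert p_{N}(B_{N})\delta _{e}-p_{N}(G)\delta _{e}\Vert _{\ell ^{2}}$. I split this difference in two.

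\emph{Truncation error.} First compare $p_{N}(G)\delta _{e}$ with $p_{N}(G^{(M_{N})})\delta _{e}$. The kernel $G-G^{(M_{N})}$ is again a convolution-type kernel, $(\gamma ,\eta )\mapsto \langle \rho (\gamma ^{-1}\eta )e_{0},e_{0}\rangle $, restricted to $R>M_{N}$. By (\ref{Gramdecay}) and the count of at most $80r^{3}$ matrices with $R=r$, the Schur test gives
\begin{equation*}
\Vert G-G^{(M_{N})}\Vert \leq 4^{12}\cdot 80\sum_{r>M_{N}}r^{-9}\lesssim 4^{12}M_{N}^{-8}\leq L\,M_{N}^{-8}.
\end{equation*}
The same Schur bound gives $\Vert G^{(M_{N})}\Vert \leq 4^{12}90<L$. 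Hermiticity is preserved because $R(\sigma (\gamma ^{-1}\eta ))=R(\sigma (\eta ^{-1}\gamma ))$. The issue is positivity: $G^{(M_{N})}$ need not be positive, so its spectrum only lies in $[-\epsilon ,L]$ with $\epsilon =\Vert G-G^{(M_{N})}\Vert $. I would avoid this through the telescoping identity
\begin{equation*}
X^{j}-Y^{j}=\sum_{k}X^{k}(X-Y)Y^{j-1-k},\qquad X=I-G/L,\;Y=I-G^{(M_{N})}/L .
\end{equation*}
Here $\Vert X\Vert \leq 1$ and $\Vert Y\Vert \leq 1+\epsilon /L\leq 1+M_{N}^{-8}$, which is harmless for $j\leq N$. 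Summing the coefficients $L^{-1/2}\binom{2j}{j}4^{-j}\leq L^{-1/2}$ gives
\begin{equation*}
\Vert p_{N}(G)-p_{N}(G^{(M_{N})})\Vert \lesssim L^{-1/2}\sum_{j\leq N}j\,\epsilon /L\lesssim L^{-1/2}(N+1)^{2}M_{N}^{-8}.
\end{equation*}
After multiplying by $\Vert A\Vert \leq \sqrt{L}$, the cutoff (\ref{cutoffs}) gives a bound of order $2^{-N}/L$, comfortably below $2^{-N}$. \textbf{The main obstacle is the constant bookkeeping}: getting exactly $2^{-N}$ depends on the factor $L$ in (\ref{cutoffs}) absorbing the constants $4^{12}80\cdot \frac{1}{8}$ and the factor $(1+M_{N}^{-8})^{N}$. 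I would check this carefully, enlarging $M_{N}$ if necessary.

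\emph{Compression error.} Next I show that $p_{N}(G^{(M_{N})})\delta _{e}=p_{N}(B_{N})\delta _{e}$ exactly. The matrix $G^{(M_{N})}$ has finite propagation: $(G^{(M_{N})}v)(\gamma )$ only involves $\eta $ with $R(\gamma ^{-1}\eta )\leq M_{N}$. By submultiplicativity $R(\gamma \eta )\leq 2R(\gamma )R(\eta )$, after $j\leq N$ applications starting from $\delta _{e}$ the support lies in $\{R\leq (2M_{N})^{j}\}\subset \mathcal{F}_{R_{N}}$. Compressing by $P_{R_{N}}$ therefore changes nothing along these paths, by induction on $j$.

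For (\ref{effective}) I apply the triangle inequality: $\Vert \Phi -\Xi _{N}\Vert \leq 2^{-N}+\Vert \Phi -P_{N}\Vert $. Lemma \ref{polynomialcertificate} bounds the second term by $\sqrt{1-\Vert P_{N}\Vert ^{2}}$. Since $\Vert \Xi _{N}\Vert =(q_{N}^{\ast }G_{R_{N}}q_{N})^{1/2}=\nu _{N}$ and $\Vert P_{N}\Vert \geq \nu _{N}-2^{-N}$, together with $\Vert P_{N}\Vert \geq 0$, the certificate follows. Convergence to $0$ holds because $\nu _{N}\to \Vert \Phi \Vert =1$.

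For (\ref{pointwise}) I use the reproducing property: $|F(z)|=|\langle F,K_{\alpha }(\cdot ,z)\rangle |\leq \Vert F\Vert \,K_{\alpha }(z,z)^{1/2}$. Finally, computability follows because every quantity involved — $B_{N}$, $q_{N}$ and $\nu _{N}$ — is finite and lies in $\mathbb{Q}(\zeta _{12})$, with $L^{-1/2}$ rational, and the error bound is explicit.
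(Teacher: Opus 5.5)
Your proposal is correct and follows the paper's proof essentially step for step. The shared ingredients are the Schur-test tail bound $\Vert G-G^{(M_N)}\Vert\le L M_N^{-8}$, the power-perturbation estimate (your telescoping identity is exactly its proof) multiplied by $\Vert A\Vert\le\sqrt{L}$, the finite-propagation argument via $R(M_1M_2)\le 2R(M_1)R(M_2)$ giving $p_N(B_N)\delta_e=p_N(G^{(M_N)})\delta_e$, and then Lemma \ref{polynomialcertificate}, $\Vert\Xi_N\Vert=\nu_N$, and the reproducing kernel. The differences are cosmetic (you get $\nu_N\to 1$ from $\Xi_N\to\Phi$ rather than from $\Vert P_N\Vert\to 1$ by dominated convergence), plus one small slip: only $\nu_N^2=q_N^{\ast}G_{R_N}q_N$ lies in $\mathbb{Q}(\zeta_{12})$, while $\nu_N$ is its positive square root, an algebraic number that remains exactly computable but need not lie in that field.
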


\noindent \textit{Proof.}\ Estimating the tail of the Gram operator as in (%
\ref{tailbound}), 
\begin{equation*}
\sum_{R(\sigma (\gamma ))>M_{N}}\left\vert \langle \rho _{\alpha }(\gamma
)e_{0}^{\alpha },e_{0}^{\alpha }\rangle \right\vert \leq 4^{\alpha
}80\,\sum_{r>M_{N}}r^{3-\alpha }<L\,M_{N}^{-8}\text{.}
\end{equation*}%
Thus, by Schur's test, 
\begin{equation}
\Vert G-G^{(M_{N})}\Vert \leq L\,M_{N}^{-8}\leq \frac{2^{-N}}{(N+1)^{2}}%
\text{.}  \label{G_G_MN}
\end{equation}%
A sequence of group elements of $\Gamma $ 
\begin{equation*}
\gamma _{0},\gamma _{1},...,\gamma _{j}\text{,}
\end{equation*}%
will be called an\emph{\ orbit path}, if $\gamma _{0}=e$ and 
\begin{equation*}
G^{(M_{N})}\left( \gamma _{k-1},\gamma _{k}\right) \neq 0\text{, \ \ \ }%
1\leq k\leq j\text{.}
\end{equation*}%
By definition of $G^{(M_{N})}$, given an orbit path, 
\begin{equation*}
R\left( \sigma (\gamma _{k-1}^{-1}\gamma _{k})\right) \leq M_{N}\text{, \ \
\ }1\leq k\leq j\text{.}
\end{equation*}%
If we are given two matrices $M_{1},M_{2}\in \limfunc{SL}(2,\mathbb{Z})$,
every entry of the product matrix $M_{1}M_{2}$ is a sum of two products of
the entries of each matrix. Taking the maximum gives 
\begin{equation*}
R(M_{1}M_{2})\leq 2R(M_{1})R(M_{2})\text{.}
\end{equation*}%
Thus,%
\begin{equation}
R\left( \sigma (\gamma _{k})\right) \leq 2R\left( \sigma (\gamma
_{k-1})\right) R\left( \sigma (\gamma _{k-1}^{-1}\gamma _{k})\right) \leq
2M_{N}R\left( \sigma (\gamma _{k-1})\right) \text{.}  \label{Rbound_k}
\end{equation}%
Since $R\left( \sigma (\gamma _{0})\right) =R\left( I\right) $, we can
iterate (\ref{Rbound_k}) to yield 
\begin{equation*}
R\left( \sigma (\gamma _{k})\right) \leq \left( 2M_{N}\right) ^{k}\text{.}
\end{equation*}%
Therefore, writing%
\begin{equation*}
F_{R}=\{\gamma \in \Gamma :R\left( \sigma (\gamma )\right) \leq R\}
\end{equation*}%
we can assure that every vertex of the path belongs to $F_{R_{N}}$, since%
\begin{equation*}
\gamma _{k}\in F_{\left( 2M_{N}\right) ^{k}}\subseteq F_{\left(
2M_{N}\right) ^{N}}=F_{R_{N}},\ \ \ 1\leq k\leq N\text{.}
\end{equation*}%
We conclude that every \emph{orbit path} of length $k\leq N$ beginning at $e$
and using $G^{(M_{N})}$ remains in $\mathcal{F}_{(2M_{N})^{k}}$. As a
result, compression by $P_{R_{N}}$ does not remove any intermediate index
contributing to $\left( G^{(M_{N})}\right) ^{k}\delta _{e}$. Consequently,
extending vectors by zero outside $F_{R_{N}}$,%
\begin{equation*}
\left( B_{N}\right) ^{k}\delta _{e}=\left(
P_{R_{N}}G^{(M_{N})}P_{R_{N}}\right) ^{k}\delta _{e}=\left(
G^{(M_{N})}\right) ^{k}\delta _{e}\text{,\ \ \ }1\leq k\leq N\text{.}
\end{equation*}%
Since $p_{N}$ is a polynomial of degree $N$, it follows that 
\begin{equation*}
q_{N}=p_{N}(B_{N})\delta _{e}=p_{N}(G^{(M_{N})})\delta _{e}\text{.}
\end{equation*}%
Now, $\Xi _{N}=Aq_{N}=Ap_{N}(G^{(M_{N})})\delta _{e}$ 
\begin{equation*}
\Xi _{N}-P_{N}=A\left( p_{N}(G^{(M_{N})})-p_{N}(G)\right) \delta _{e}\text{,}
\end{equation*}%
As a result,%
\begin{equation*}
\left\Vert \Xi _{N}-P_{N}\right\Vert _{\mathcal{A}_{\alpha }}\leq \left\Vert
A\right\Vert _{\mathcal{A}_{\alpha }}\left\Vert
p_{N}(G^{(M_{N})})-p_{N}(G)\right\Vert _{\mathcal{A}_{\alpha }}\text{.}
\end{equation*}%
By the definition (\ref{polynomialEffective}) of $p_{N}$, 
\begin{equation}
\left\Vert p_{N}(B_{N})-Ap_{N}(G)\right\Vert _{\mathcal{A}_{\alpha }}\leq
L^{-1/2}\sum_{j=0}^{N}\binom{2j}{j}4^{-j}\left\Vert \left( I-\frac{%
G^{(M_{N})}}{L}\right) ^{j}-\left( I-\frac{G}{L}\right) ^{j}\right\Vert 
\text{.}  \label{differenceNorm}
\end{equation}%
To estimate the summands in the right hand side, we use, for $j\geq 1$, the
power perturbation inequality%
\begin{equation*}
\left\Vert X^{j}-Y^{j}\right\Vert \leq j\max \left\{ \left\Vert X\right\Vert
,\left\Vert Y\right\Vert \right\} ^{j-1}\left\Vert X-Y\right\Vert
\end{equation*}%
to yield 
\begin{equation}
\left\Vert \left( I-\frac{G^{(M_{N})}}{L}\right) ^{j}-\left( I-\frac{G}{L}%
\right) ^{j}\right\Vert \leq j\left\Vert I-\frac{G^{(M_{N})}}{L}\right\Vert
^{j-1}\left\Vert \frac{G-G^{(M_{N})}}{L}\right\Vert \text{.}  \label{power}
\end{equation}%
Since $0\leq G<L\,I$, the spectrum of $I-\frac{G}{L}$ lies on $\left[ 0,1%
\right] $, hence 
\begin{equation*}
\left\Vert I-\frac{G}{L}\right\Vert \leq 1\text{.}
\end{equation*}%
Now write%
\begin{equation*}
I-\frac{G^{(M_{N})}}{L}=I-\frac{G}{L}+\frac{G-G^{(M_{N})}}{L}\text{.}
\end{equation*}%
Thus, writing $d_{N}=\frac{2^{-N}}{(N+1)^{2}}$ for the bound (\ref{G_G_MN}%
),\ 
\begin{equation*}
\left\Vert I-\frac{G^{(M_{N})}}{L}\right\Vert \leq \left\Vert I-\frac{G}{L}%
\right\Vert +\left\Vert \frac{G-G^{(M_{N})}}{L}\right\Vert \leq 1+\frac{d_{N}%
}{L}\text{.}
\end{equation*}%
Combining this with (\ref{differenceNorm}),\ (\ref{power}) and (\ref{G_G_MN}%
), gives 
\begin{eqnarray*}
\left\Vert p_{N}(B_{N})-p_{N}(G)\right\Vert _{op} &\leq
&L^{-1/2}\sum_{j=0}^{N}\binom{2j}{j}4^{-j}j\frac{d_{N}}{L}\left( 1+\frac{%
d_{N}}{L}\right) ^{j-1} \\
&\leq &2d_{N}L^{-3/2}\sum_{j=0}^{N}j\leq L^{-3/2}2^{-N}\text{,}
\end{eqnarray*}%
where in the second line we used $\binom{2j}{j}4^{-j}\leq 1$ and 
\begin{equation*}
\left( 1+\frac{d_{N}}{L}\right) ^{j-1}\leq \exp \left( (j-1)\frac{d_{N}}{L}%
\right) \leq \exp \left( N\frac{d_{N}}{L}\right) \leq e^{1/8}\leq 2\text{.}
\end{equation*}%
Now, since $\Vert A\Vert ^{2}=\Vert G\Vert <L$, then $\Vert A\Vert \leq 
\sqrt{L}$, , and:%
\begin{equation}
\left\Vert \Xi _{N}-P_{N}\right\Vert _{\mathcal{A}_{\alpha }}\leq \sqrt{L}%
\left\Vert p_{N}(B_{N})-p_{N}(G)\right\Vert _{op}\leq L^{-1}2^{-N}\leq 2^{-N}%
\text{.}  \label{Estimate}
\end{equation}%
Combining (\ref{Estimate}) with (\ref{etacertificate}) gives%
\begin{eqnarray*}
\left\Vert \Phi -\Xi _{N}\right\Vert _{\mathcal{A}_{\alpha }} &\leq
&\left\Vert \Phi -P_{N}\right\Vert _{\mathcal{A}_{\alpha }}+\left\Vert \Xi
_{N}-P_{N}\right\Vert _{\mathcal{A}_{\alpha }} \\
&\leq &\sqrt{1-\Vert P_{N}\Vert _{\mathcal{A}_{\alpha }}^{2}}+2^{-N} \\
&\leq &\sqrt{1-\left( \max \left\{ 0,\left\Vert \Xi _{N}\right\Vert _{%
\mathcal{A}_{\alpha }}-2^{-N}\right\} \right) ^{2}}+2^{-N}\text{,}
\end{eqnarray*}%
since%
\begin{equation}
\Vert P_{N}\Vert _{\mathcal{A}_{\alpha }}\geq \left\Vert \Xi _{N}\right\Vert
_{\mathcal{A}_{\alpha }}-\left\Vert \Xi _{N}-P_{N}\right\Vert _{\mathcal{A}%
_{\alpha }}\geq \left\Vert \Xi _{N}\right\Vert _{\mathcal{A}_{\alpha
}}-2^{-N}\text{.}  \label{P_N_lower}
\end{equation}%
To obtain (\ref{Xieta}) observe that,\ since $q_{N}$ is supported in $%
\mathcal{F}_{R_{N}}$,\ the norm $\left\Vert \Xi _{N}\right\Vert _{\mathcal{A}%
_{\alpha }}$ can be computed exactly from%
\begin{equation}
\left\Vert \Xi _{N}\right\Vert _{\mathcal{A}_{\alpha }}^{2}=q_{N}^{\ast
}G_{R_{N}}q_{N}=\nu _{N}^{2}\text{,}  \label{ExactNorm}
\end{equation}%
where $G_{R_{N}}$ is the full Gram matrix on $\mathcal{F}_{R_{N}}$.

By the reverse triangle inequality and (\ref{Estimate}), 
\begin{equation}
\left\vert \nu _{N}-\Vert P_{N}\Vert _{\mathcal{A}_{\alpha }}\right\vert
\leq \left\Vert \Xi _{N}-P_{N}\right\Vert _{\mathcal{A}_{\alpha }}\leq 2^{-N}
\label{ModuleEstimate}
\end{equation}%
hence, 
\begin{equation*}
0\leq \left( \max \{0,\nu _{N}-2^{-N}\}\right) \leq \Vert P_{N}\Vert _{%
\mathcal{A}_{\alpha }}\leq 1\text{.}
\end{equation*}%
Finally,%
\begin{equation*}
\Vert P_{N}\Vert _{\mathcal{A}_{\alpha }}^{2}=\left\Vert
G^{1/2}p_{N}(G)\delta _{e}\right\Vert _{\ell
^{2}}^{2}=\int_{[0,L]}\left\vert \sqrt{\lambda }\,p_{N}(\lambda )\right\vert
^{2}d\nu _{\delta _{e}}\rightarrow 1\text{.}
\end{equation*}%
Dominated convergence gives $\Vert P_{N}\Vert _{\mathcal{A}_{\alpha
}}\rightarrow 1$. Now, (\ref{ModuleEstimate}) implies $\nu _{N}\rightarrow 1$%
. Consequently,%
\begin{equation*}
\max \{0,\nu _{N}-2^{-N}\}\rightarrow 1\text{.}
\end{equation*}%
and%
\begin{equation*}
\sqrt{1-\left( \max \{0,\nu _{N}-2^{-N}\}\right) ^{2}}+2^{-N}\rightarrow 0%
\text{.}
\end{equation*}%
Finally, since $\nu _{N}^{2}=q_{N}^{\ast }G_{R_{N}}q_{N}$\ is computed from
finite matrices, the bound (\ref{effective}) is a finite computable error
certificate. The inequality (\ref{pointwise}) follows trivially from the
reproducing kernel identity. \hfill $\square $

\subsection{The effective procedure}

Theorem \ref{finitecertificate} provides a so-called effective procedure to
compute $\Phi $, showing therefore that $\Phi $\emph{\ is computable} in the
standard sense of computable analysis \cite{Computability}. Indeed, we may
equip $\mathcal{A}_{12}$ with the computable presentation generated by the
explicit orbit vectors $\rho _{12}(\gamma )e_{0}^{\alpha }$, $\gamma \in
\Gamma $. Since $\Xi _{N}$ is a finite algebraic combination of these
vectors and $\left\Vert \Phi -\Xi _{N}\right\Vert \leq E_{N}\rightarrow 0$
effectively, where 
\begin{equation*}
E_{N}=2^{-N}+\sqrt{1-\left( \max \{0,\nu _{N}-2^{-N}\}\right) ^{2}}\text{.}
\end{equation*}%
the magic function $\Phi $ is a computable element of $\mathcal{A}_{12}$. We
explicitly outline the 7 steps of the explicit approximation algorythm,
which provides an effective procedure.

\textbf{Effective procedure. }Given $\epsilon >0$, proceed as follows:

\begin{enumerate}
\item Chose $N=1,2,...$

\item Compute $d_{N},M_{N},R_{N}$.

\item Enumerate the finite set%
\begin{equation*}
F_{R_{N}}=\{\gamma \in \Gamma :R\left( \sigma (\gamma )\right) \leq R_{N}\}
\end{equation*}

\item Construct the finite matrices $B_{N}$ and $G_{R_{N}}$.

\item Compute 
\begin{equation*}
q_{N}=p_{N}(B_{N})\delta _{e}
\end{equation*}%
and the finite function $\Xi _{N}=Aq_{N}$.

\item Compute%
\begin{equation*}
\nu _{N}=(q_{N}^{\ast }G_{R_{N}}q_{N})^{1/2}
\end{equation*}%
and 
\begin{equation*}
E_{N}=2^{-N}+\sqrt{1-\left( \max \{0,\nu _{N}-2^{-N}\}\right) ^{2}}\text{.}
\end{equation*}

\item Output $\Xi _{N}$ when $E_{N}<\varepsilon $. Otherwise, increment $N$.
\end{enumerate}

The operations involved in 1.-7. are finite: enumeration of integer
matrices, construction of finite Gram matrices, matrix--vector arithmetic,
evaluation of a polynomial at a finite matrix, and certified algebraic
square-root comparison. We have chosen the spectral bound $L=4^{12}100$ so
that $L^{1/2}=40960$ is an integer and $L^{-1/2}$ a rational number.
Consequently, all entries of the finite Gram matrices and all coefficients
of the approximants $\Xi _{N}$ belong to $\mathbb{Q}(\zeta _{12})$ and can
be obtained by exact algebraic arithmetic. The associated norm and error
certificates involve only finitely many arithmetic operations, comparisons,
positive square roots of explicitly given algebraic numbers and are
therefore effectively computable.

\textbf{Acknowledgement:} This research was funded in part by the Austrian
Science Fund (FWF) through the project 10.55776/PAT8205923 (L.D.A.). AI
assistance proved to be of valuable help in the development of the effective
procedure presented in section 4. For open access purposes, the author has
applied a CC BY public copyright license to any author-accepted manuscript
version arising from this submission.

\end{document}